\definecolor{skyblue}{rgb}{0.85,0.85,1}
\newtheorem{theorem}{Theorem}[section]
\newtheorem{lemma}[theorem]{Lemma}
\theoremstyle{definition}
\theoremstyle{remark}
\newtheorem{rem}[theorem]{Remark}
\newcommand{\cL}{\mathcal{L}}
\newcommand\cO{\mathcal{O}}
\newcommand\CC{\mathds{C}}
\newcommand{\bbH}{\mathds{H}}
\newcommand{\bbI}{\mathds{I}}
\newcommand\R{\mathds{R}}
\newcommand{\Z}{\mathds{Z}}
\newcommand{\ve}{\varepsilon}
\newcommand{\q}{\theta}
\DeclareMathOperator{\re}{Re}
\DeclareMathOperator{\ind}{ind}
\DeclareMathOperator{\sige}{\sigma_\textrm{ess}}
\title[Stability of Fisher-Stefan Waves]{Stability of Asymptotic Waves in the Fisher-Stefan Equation}
\author{T. T. H. Bui, P. V Heijster, \& R. Marangell}
\date{\today}                                           
\begin{document}
\begin{abstract}
We establish spectral, linear, and nonlinear stability of the {\color{black}{vanishing}} and slow-moving travelling waves that arise as time asymptotic solutions to the Fisher-Stefan equation. Nonlinear stability is in terms of the limiting equations that the asymptotic waves satisfy. 
\end{abstract}

\maketitle

\parskip=-0.5ex{\tableofcontents}
\parskip=1ex

\section{Introduction}

The Fisher-Stefan equation is the Fisher-Kolmogorov-Petrovskii-Piskunov (F-KPP) nonlinear partial differential equation (PDE) with a Stefan moving-boundary condition (\cref{eq:movingb} below): 
\begin{subequations}\label{eq:fkppstef}
\begin{align}
u_t &= d u_{xx} + u(a-bu), \qquad && t >0, \quad 0< x <h(t), \\ 
u_x(t,0) &=  u(t,h(t)) = 0, \quad && t>0, \\ 
h'(t) & = -\mu u_x(t,h(t)), \quad && t>0, \label{eq:movingb} \\ 
h(0)  &= h_0, u(0,x) = u_0(x), \quad && 0\leq x\leq h_0. 
\end{align}
\end{subequations}
Here, subscripts of the independent variables indicate partial derivatives, $x = h(t)$ is the moving boundary condition to be determined, the positive constants $d,a,b,h_0$ and $\mu$ are given, and the initial function satisfies
\begin{equation}\label{eq:cauchy}
u_0 \in C^2([0,h_0]), \quad u_0'(0) = u_0(h_0) = 0, \quad \textrm{and } \quad u_0 >0 \quad \textrm{in } [0,h_0).
\end{equation}
For our exposition, we will set without loss of generality $d = a = b = 1$, but all of our results are applicable to the general case with the appropriate modifications.

The Fisher-Stephan equation, \cref{eq:fkppstef} has been introduced and studied in \cite{DL10} as a means to model a population {\color{black}{with finite support}} undergoing logistic growth with spatial diffusion, but which could also possibly go extinct. It has been subsequently studied in \cite{DG12}, \cite{DL15},\cite{EM19}, \cite{EMS21} and  \cite{MES22}, as well as in the review article \cite{SM24}. In \cite{DL10}, the existence of vanishing solutions {\color{black}{(i.e. solutions that go to zero in the limit $t \to \infty$)}}, as well as solutions travelling with wave speeds $0\leq c<2$ was established for \cref{eq:fkppstef} with initial conditions in \eqref{eq:cauchy}.

The results of \cite{DL10} are somewhat surprising because, for the classical Fisher-KPP boundary value problem on the line:
\begin{equation} \label{eq:fkppline}
\begin{split}
u_t  = u_{xx} + u(1-u), \qquad
u(t,-\infty) = 1\,, \quad \textnormal{and} \quad u(t,+\infty) = 0\,,
\end{split}
\end{equation}
both the zero solution and travelling wave solutions of speed $0\leq c < 2$ are classically known to be (absolutely) unstable (see for example, \cite{satt77} or the review article \cite{saarloos03}, or some of the authors' own work on the subject \cite{HvH15} for more details). However, as we shall show, the adaption of the problem to one with a moving boundary leads to asymptotic problems which admit {\color{black}{these}} spectrally stable solutions. Nonlinear stability of these solutions relative to perturbations in the appropriate spaces will then follow from spectral stability and some general results from the theory of dynamical systems on Banach spaces.

In \cite{DL10} it was shown that asymptotically (as $t\to \infty$), a so-called `spreading-vanishing dichotomy' occurs for solutions to the Cauchy problem in \cref{eq:fkppstef} with initial data given by \eqref{eq:cauchy} with a small enough initial profile. Namely, solutions to \cref{eq:fkppstef} with small enough positive initial data would approach either $u=0$, or a fixed travelling wave with speed $0\leq c<2$ arising as the solution to an ordinary differential equation (ODE) boundary value problem. 
The nature of the asymptotic solution, in addition to depending on the size of the initial profile, depends on the length of the initial interval $h_0$. In particular, for the assumed parameter values $d = a = b = 1$ the results of~\cite{DL10} imply that if $h_0 \leq \pi/2$\, then the solution to \cref{eq:fkppstef} tends, as $t \to \infty $, to $u \to 0$ (i.e. the vanishing case), and $h(t) \to h_\infty = \pi/2$. In contrast, if $h_0 \geq \pi/2$, then $h(t) \sim ct$ (that is $h$ tends to a linear function of $t$ as $t \to \infty$) and the solution to \cref{eq:fkppstef} tends to the solution of the following ODE boundary value problem on the half line in the travelling wave coordinate $z := x-ct$
\begin{equation} \label{eq:fishhalf}
\begin{split}
u_{zz} + c u_z + u(1-u) = 0\,, \qquad
u_z(-\infty) = 0, \quad \textrm{and } \quad u(0) = 0.
\end{split}
\end{equation}
Furthermore, it was shown that in the spreading case, the limit of the solution to the Cauchy problem \cref{eq:fkppstef} with initial data \eqref{eq:cauchy} converges  to 1 uniformly in any compact interval of $[0,\infty)$. 
The wave speed $c<2$ is (uniquely) determined by the positive parameter $\mu$ and the boundary condition $h'(t) = -\mu u_x(t,h(t))$ for $t>0$. 
To the best of our knowledge, a general formula for determining the wave speed from the parameter $\mu$ is not known, but some estimates have appeared in \cite{DL10}, which were updated in \cite{BDK12}, and an asymptotic formula that is accurate for small values of $c$ was derived in \cite{EM19}. {\color{black} In \cref{appendix} we derive a power series expansion of the unstable manifold of the saddle in the phase plane of \cref{eq:fishhalf} at $(0,1)$, viewed as a graph over the stable manifold, which we can then use to approximate the value of $\mu$ as a function of $c$ for all values of $0<c< 2$.}

In this paper we examine the limiting PDEs, and determine the spectrum of the linearised operators, linearised about either the vanishing or the spreading solution. We find that under cases compatible with the results of \cite{DL10}, these solutions are spectrally stable in their respective PDEs. Then, standard results from the theory of dynamical systems in Banach spaces allow us to deduce nonlinear stability relative to perturbations in an appropriately chosen Banach space. 

The paper is organised as follows, in \cref{sec:vanishing}, we show that the asymptotically vanishing solution is nonlinearly stable in the PDE boundary value problem describing it. In \cref{sec:spreading}, we show that the spreading solution is spectrally stable relative to the PDE boundary value problem that describes it. 
The calculation in \cref{sec:point} shows that the angular coordinate $\q(z;\lambda)$ is, for each fixed $z \in (-\infty,0]$, a monotone function of $\lambda$. This is essentially a Maslov index calculation, though in this low dimensional setting more elementary means are sufficient to establish the monotonicity argument. Our argument is the one from Chapter 8 of \cite{codlev55}, adapted to the half-infinite interval.
In \cref{sec:nonlinear}, we apply some known results from the theory of dynamical systems to show nonlinear stability of the asymptotic solutions. We conclude the manuscript with some discussion in \cref{sec:disc}.

\section{Vanishing solution}\label{sec:vanishing}
We first show that the vanishing solution is spectrally stable. We consider a perturbation of the zero solution of the asymptotic boundary value problem:
\begin{equation} \label{eq:van}
\begin{split}
u_t  = u_{xx} + u(1-u)\,, \qquad
u_x(t,0)   =0\,, \quad \textnormal{and} \quad u(t,h_\infty) = 0.
\end{split}
\end{equation}
Here, we consider $h_\infty$ as the asymptotic size of the domain as $t\to \infty$.  
Substituting $u = 0 + \ve p(t,x)$ and keeping terms of order $\ve$ we are led to the linear boundary value problem 
\begin{equation}\label{eq:linvan}
\begin{split}
p_t  = p_{xx} + p\,, \qquad 
p_x(t,0)  =0\,, \quad \textnormal{and} \quad p(t,h_\infty) = 0,
\end{split}
\end{equation}
which is solvable by elementary PDE methods. Separation of variables yields the eigenfunctions 
$$
\phi_n(x) = \cos \left( \dfrac{(2n-1)\pi x}{2h_\infty} \right)\,, \quad n \geq 1 \in \Z\,,
$$
with simple eigenvalues 
$$
\lambda_n = 1 - \dfrac{(n-\frac12)^2 \pi^2}{h_\infty^2}.
$$
As the eigenvalues are a decreasing set, spectral stability of the zero solution as a solution of \cref{eq:linvan} follows provided $\lambda_1 \leq 0$. Substituting in $n=1$ and setting $\lambda_1 = 0$ and rearranging yeilds precisely the condition that $h_\infty = \pi/2$. We note that this {\em a fortiori} explains the value of $h_\infty = \pi/2$, and subsequently the critical $h_0$ for the vanishing problem~\cite{DL10}: it is the largest possible value on which a stable zero solution (in the sense that the solution is stable in the limiting PDE that it satisfies) can possibly occur. Hence $[0,\pi/2]$ is the largest interval that will admit solutions that vanish. For any larger $h_\infty$, we see that the zero solution is spectrally (and by standard results in the literature, linearly and nonlinearly \cite{Hen80}) unstable, and so no vanishing can occur on an interval longer than $\pi/2$. 

Nonlinear stability of the zero solution in \cref{eq:van} follows from standard results in the literature \cite{Hen80}. Namely, the linearised operator $L := \partial_{xx} + 1$, with boundary conditions $p'(0) = p(h_\infty) = 0$ is self-adjoint, bounded from above and has a compact resolvent. This together with the fact that the spectrum is contained in the left half plane, save for potentially a simple eigenvalue at $\lambda = 0$ (and only when $h_\infty = \pi/2$) means that we have nonlinear stability of the zero solution for suitably chosen perturbations \cite{Hen80}. We defer a more detailed discussion of nonlinear stability to \cref{sec:nonlinear} where we will derive nonlinear stability of the spreading solution in terms of its asymptotic PDE as well.

\section{Spreading solution} \label{sec:spreading}
We next turn our attention to the spreading solutions. Passing to a moving coordinate frame by setting $z = x-ct$, and $\tau = t$, it was shown in \cite{DL10} that $h \sim c \tau $ as $t \to \infty$ and solutions to \cref{eq:fkppstef} with Cauchy data in accordance with \eqref{eq:cauchy} will tend to a standing wave (i.e. $\tau$ invariant) solution of the following problem
\begin{equation}\label{eq:trav}
\begin{split}
U_\tau &=  U_{zz} + c U_z+ U(1-U)\,, \qquad \tau >0, \quad -\infty< z <0\,, \\ 
U_z(\tau,&-\infty) =  U(\tau,0) = 0 \quad t>0\,, \\ 
\end{split}
\end{equation}
where 
\begin{equation}\label{eq:asymptoticspeed}
c = h'(\tau)  = -\mu U_z(\tau,0).
\end{equation}
Such a solution will be a solution $u(z)$ to the following ODE boundary value problem 
\begin{equation} \label{eq:twode}
\begin{split}
u_{zz} + c u_z + u(1-u) = 0\,, \qquad
u_z(-\infty) = 0\,, \quad \textrm{and } \quad u(0) = 0.
\end{split}
\end{equation}
There exists a unique positive, (for $z \in (-\infty,0]$) decreasing solution to \cref{eq:twode} whenever $0\leq c <2$. One way to realise this well-known solution is to first define $v:=u_z$ and then the asymptotic spreading solution of \cref{eq:fkppstef} can be realised as the part of the unstable manifold in the fourth quadrant of the phase plane (i.e. with $u\geq0$ and $v\leq 0$)  of the saddle point $(1,0)$ of the following system
\begin{equation}\label{eq:nonlinsys}
\begin{pmatrix}
u \\ v 
\end{pmatrix}_z  = F(u,v) := \begin{pmatrix} v \\ -cv -u(1-u) \end{pmatrix}.
\end{equation}
That such a {\color{black}{positive}} solution is unique follows from the hyperbolicity of the critical point $(1,0)$ of \cref{eq:nonlinsys} and standard results in invariant manifold theory (see, e.g. \cite{mei07}). We note that unlike the counterpart on the line, this solution is not translation invariant, as the boundary condition explicitly prescribes the value that the solution must take at $z =0$. Thus, if we denote the solution travelling with speed $c$ by $\bar{u}_c$ (with some apologies for abusing notation as that this does not mean differentiation with respect to $c$) we have that, for example, the solution to the standing wave equation ($c=0$) is given as
\begin{equation}
\label{eq:u0}
\bar{u}_0(x) = 1-\frac{3}{1+2 \cosh{(x)}-\sqrt{3}\sinh{(x)}}\,.
\end{equation}
See figure~\ref{fig:solutions} for a plot of $\bar{u}_0(x)$ \eqref{eq:u0} and the corresponding solution of \cref{eq:nonlinsys}.
\begin{figure} \
\includegraphics[scale=0.70]{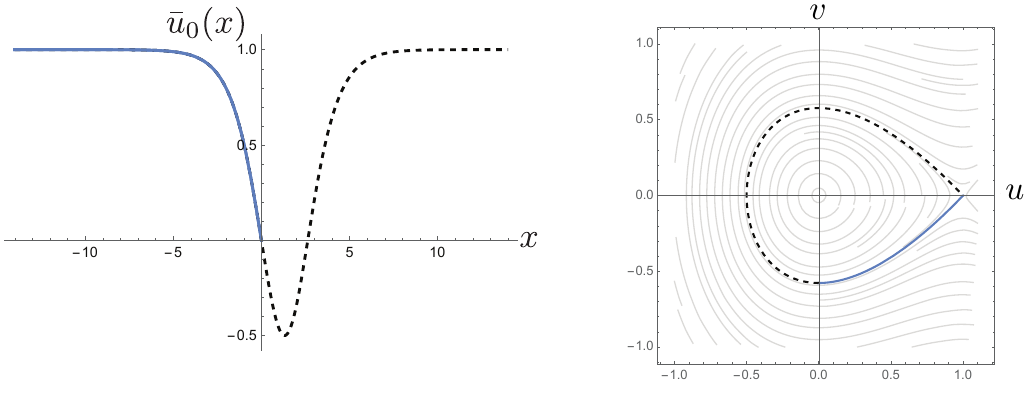} 
\caption{Left: a plot of the limiting solution $\bar{u}_0(x)$ \eqref{eq:u0} on $(-\infty,0]$ (solid line - blue online). The black dashed line illustrates how it compares to a standing wave solution of the classical Fisher-KPP equation on the line. Right: the corresponding phase portrait of \cref{eq:nonlinsys} with $c=0$. The solution (solid blue line) to the Fisher-Stefan equation is realised as the part of the unstable manifold of the saddle point $(1,0)$ of \cref{eq:nonlinsys} in the fourth quadrant.} \label{fig:solutions}
\end{figure} 

Returning to spectral stability, as above we denote the solution to \cref{eq:twode} as $\bar{u}_c$, and similarly, the stable manifold {\color{black}{of interest}} in quadrant IV of \cref{eq:nonlinsys} as $(\bar{u}_c,\bar{v}_c)$. We consider perturbations of the form $U = \bar{u}_c+ \ve e^{\lambda \tau}p(z)$ and, substituting into \cref{eq:trav} and collecting terms of $\cO(\ve)$, we see that the perturbation $p$ must satisfy the following linear ODE boundary value problem:
\begin{equation}\label{eq:bvp}
\begin{split}
\cL p := p_{zz} & + c p_z + (1 - 2\bar{u}_c)p = \lambda p\,, \qquad
p_z(-\infty)= 0\,, \quad \textnormal{and} \quad p(0) = 0.
\end{split}
\end{equation}
We now establish that the spectrum of the operator $\cL$ will be contained in the left half plane.

\subsection{Essential spectrum} 
We first compute the essential spectrum of the operator $\cL$ which will follow from general results. For our definitions and setup we follow \cite{KP13}.

 The {\em resolvent} set of $\cL,$ denoted $\rho(\cL),$ is the set of $\lambda \in \CC$ such that $(\cL-
\lambda)^{-1}$ exists and is a bounded operator. The 
{\em spectrum} of $\cL,$ denoted $\sigma(\cL) := \CC\setminus \rho(\cL)$ is defined as the complement of the resolvent set in $\CC$. We say an operator $\cL$ is {\em Fredholm} if it has a finite dimensional kernel 
and a closed, finite codimensional range. We define {\em the Fredholm index} (or {\em index} when the context is clear) of $\cL$, denoted $
\ind{(\cL)}$, as the quantity $\ind{(\cL)}:= \dim{[\ker{(\cL)}]} - \dim{[\ker{(\cL^a)}]},$ {\color{black}{where $\cL^a$ is the {\em 
adjoint} of the operator $\cL$. 
The {\em essential spectrum} of a Fredholm operator $\cL$, $\sige{(\cL)}$, is the 
set of $\lambda \in \CC$ such that either $\cL - \lambda$ is not Fredholm or $\cL-\lambda$ is Fredholm but has non-zero index. 

A description of the essential spectrum follows from an application of Weyl's theorem, which says that the essential spectrum of an operator is the same up to relatively compact perturbations of the operator. We call the operator $\cL_\infty$ a {\em relatively compact perturbation} of an operator $\cL$ if $(\cL-\cL_\infty)(\cL_\infty-\lambda)^{-1}$ is compact for some $\lambda \in \rho(\cL_\infty)$. We have the following:

\begin{theorem}[Weyl's essential spectrum theorem (Theorem 2.26 from \cite{KP13})] Let $\cL$ and $\cL_\infty$ be closed linear operators on a Hilbert space. If $\cL$ is a relatively compact perturbation of $\cL_\infty$, then the following holds
\begin{enumerate}
\item The operator $\cL - \lambda$ is Fredholm if and only if $\cL_\infty - \lambda$ is Fredholm. 
\item $\ind(\cL_\infty - \lambda) = \ind(\cL - \lambda)$
\item The operators $\cL$ and $\cL_\infty$ have the same essential spectra. 
\end{enumerate}
\end{theorem}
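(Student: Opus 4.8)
Since this is quoted as Theorem~2.26 of \cite{KP13}, no new argument is strictly required; what follows is a sketch of how one would supply a self-contained proof. The plan is to reduce the statement to the classical fact that the Fredholm index of a \emph{bounded} operator between Banach spaces is stable under compact perturbations. Fix $\mu \in \rho(\cL_\infty)$ for which $K := (\cL-\cL_\infty)(\cL_\infty-\mu)^{-1}$ is a compact operator on $\cH$. First I would do the domain bookkeeping: since $(\cL_\infty-\mu)^{-1}$ maps $\cH$ onto $D(\cL_\infty)$ and $K$ is everywhere defined and bounded, we get $D(\cL_\infty)\subseteq D(\cL)$; together with the standing assumption that the two operators share a domain, $D(\cL)=D(\cL_\infty)=:Y$. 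Equipping $Y$ with the $\cL_\infty$-graph norm makes it a Hilbert space on which $\cL_\infty-\lambda$ is bounded for every $\lambda$, and the second-resolvent identity $(\cL_\infty-\mu)(\cL_\infty-\lambda)^{-1} = I+(\lambda-\mu)(\cL_\infty-\lambda)^{-1}$ upgrades the single-$\mu$ hypothesis to the statement that $(\cL-\cL_\infty)(\cL_\infty-\lambda)^{-1}$ is compact for every $\lambda\in\rho(\cL_\infty)$; in particular the $\cL$-graph norm is equivalent to the $\cL_\infty$-graph norm on $Y$.

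Next I would check that $B:=\cL-\cL_\infty$, regarded as a map $(Y,\|\cdot\|_{\cL_\infty})\to\cH$, is compact: on $Y$ one has $B = K\circ(\cL_\infty-\mu)$ with $(\cL_\infty-\mu):Y\to\cH$ bounded and $K:\cH\to\cH$ compact, so the composition is compact. Now for arbitrary $\lambda\in\CC$ regard $A_\lambda:=\cL_\infty-\lambda:Y\to\cH$ and $A_\lambda+B=\cL-\lambda:Y\to\cH$ as bounded operators. Because $D(\cL_\infty)=D(\cL)=Y$, the kernels and ranges of these bounded operators coincide with those of the unbounded operators $\cL_\infty-\lambda$ and $\cL-\lambda$ (and a finite-codimensional range of a bounded operator between Banach spaces is automatically closed), so the unbounded operator is Fredholm, with a prescribed index, if and only if the corresponding bounded operator is.

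The remaining step is the classical one: if $A_\lambda$ is Fredholm then, $B$ being compact, $A_\lambda+B$ is Fredholm with $\ind(A_\lambda+B)=\ind(A_\lambda)$; conversely, writing $A_\lambda=(A_\lambda+B)+(-B)$ with $-B$ compact shows Fredholmness of $\cL-\lambda$ implies that of $\cL_\infty-\lambda$ with the same index. This delivers items (1) and (2) at once. Item (3) is then immediate from the definition: $\sige(\cL)=\{\lambda\in\CC:\cL-\lambda \text{ is not Fredholm, or is Fredholm with }\ind\neq 0\}$, and by (1)--(2) this set coincides with the corresponding set for $\cL_\infty$, i.e. $\sige(\cL)=\sige(\cL_\infty)$.

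The main obstacle is not the index-stability step (a textbook result) but the functional-analytic bookkeeping for unbounded operators: verifying that $D(\cL)=D(\cL_\infty)$ with equivalent graph norms, so that the passage to bounded operators on $Y$ is legitimate, and that $\cL-\cL_\infty$ is genuinely $\cL_\infty$-compact rather than merely compact after composition with one particular resolvent. The bridge for the latter is precisely the resolvent identity noted above, which propagates the hypothesis from $\mu$ to all of $\rho(\cL_\infty)$.
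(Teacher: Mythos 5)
The paper does not prove this statement at all --- it is quoted verbatim as Theorem~2.26 of \cite{KP13} and used as a black box --- so there is no ``paper proof'' to compare against; the relevant benchmark is the standard textbook argument, and your sketch is essentially that argument, correctly executed. The reduction to index stability of bounded Fredholm operators under compact perturbation, via the graph norm on $Y=D(\cL_\infty)$ and the factorisation $B=K\circ(\cL_\infty-\mu)$, is the right skeleton, and your use of the second resolvent identity to propagate relative compactness from one $\mu$ to all of $\rho(\cL_\infty)$ is correct. One caveat: the clause ``together with the standing assumption that the two operators share a domain'' imports a hypothesis that is not in the statement as quoted (which only assumes both operators are closed). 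That hypothesis is doing real work in your item (1): if $D(\cL)\supsetneq D(\cL_\infty)$, the kernel and range of $\cL-\lambda$ as an unbounded operator need not coincide with those of its restriction to $Y$, and the ``if and only if'' could fail in one direction. In the setting of \cite{KP13} (and of this paper, where $\cL$ and $\cL_\infty$ are second-order differential operators differing by a bounded multiplication operator and subject to the same boundary conditions) the domains do coincide, so nothing is lost here, but a self-contained proof should state that hypothesis explicitly rather than derive only the inclusion $D(\cL_\infty)\subseteq D(\cL)$ and then silently upgrade it to equality.
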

We next claim that since the coefficient functions of $\cL$ {\color{black}{given in \cref{eq:bvp}}} approach their end states as $z \to -\infty$ exponentially fast, $\cL$ will be a relatively compact perturbation of the resulting far-field operator. In particular, because $\bar{u}_c$ approaches its end value exponentially as $z \to -\infty$, the operator $\cL$ is a relatively compact perturbation of the far-field operator $\cL_\infty$ given by
$$
\cL_\infty p  := p_{zz} + c p_z - p\,,
$$
with boundary conditions $p_z(-\infty) = 0 =p(0)$.  Heuristically, this can be seen by the fact that $\cL_\infty$ has a bounded resolvent (see the Greens function in \cref{eq:green} below), {\color{black}{and the difference $\cL-\cL_\infty = 2(1-\bar{u}_c)$ is a function which is $2$ at $z = 0$ and which decays exponentially to zero as $z\to -\infty$}}, so the resulting operator $(\cL-\cL_\infty)(\cL_\infty - \lambda)^{-1}$ will be integration against a square integrable function and hence compact. For a rigorous proof of this fact in a general setting, we direct the reader to Lemma 3.18 in \cite{KP13}. 

Weyl's theorem then gives that the essential spectrum of $\cL$ is the same as the essential spectrum of $\cL_\infty$. The spectrum of $\cL_\infty$ can be found by directly inverting $\cL_\infty - \lambda$, subject to the boundary conditions. We have that $\lambda \in \rho(\cL_\infty)$ (the resolvent set of $\cL_\infty$ - see \cref{fig:essential})
when it is to the right of the dispersion relation:
\begin{equation} \label{eq:conspec}
 \lambda = -k^2 - 1 + i c k\,, \quad k \in \R.
\end{equation}
We refer to this parabola in \cref{eq:conspec} as the {\em continuous spectrum} or the {\em Fredholm border} and in this case it is the boundary of the essential spectrum. This is because along the boundary parabola, the operator $\cL_\infty$ (and because of Weyl's theorem $\cL$) is not Fredholm.

For $\lambda \in \rho(\cL_\infty)$, we have a decaying solution to the ODE $(\cL_\infty - \lambda)p = 0$ as $z \to -\infty$ and can thus invert $\cL_\infty - \lambda$, while for $\lambda$ to the left of the dispersion relation, there are no decaying far-field solutions and the operator is therefore not invertible on $L^2(\R^-)$ and indeed it has non-zero Fredholm index for this set. See \Cref{fig:essential}. Crucially, we observe that in \cref{eq:conspec} we have that the parabola and its interior are entirely contained in the left half plane, {\color{black}{see again \Cref{fig:essential}}}. This concludes the proof that the essential/continuous spectrum of the spreading solution is contained in the left half plane.

\begin{figure}
\includegraphics[scale=0.4]{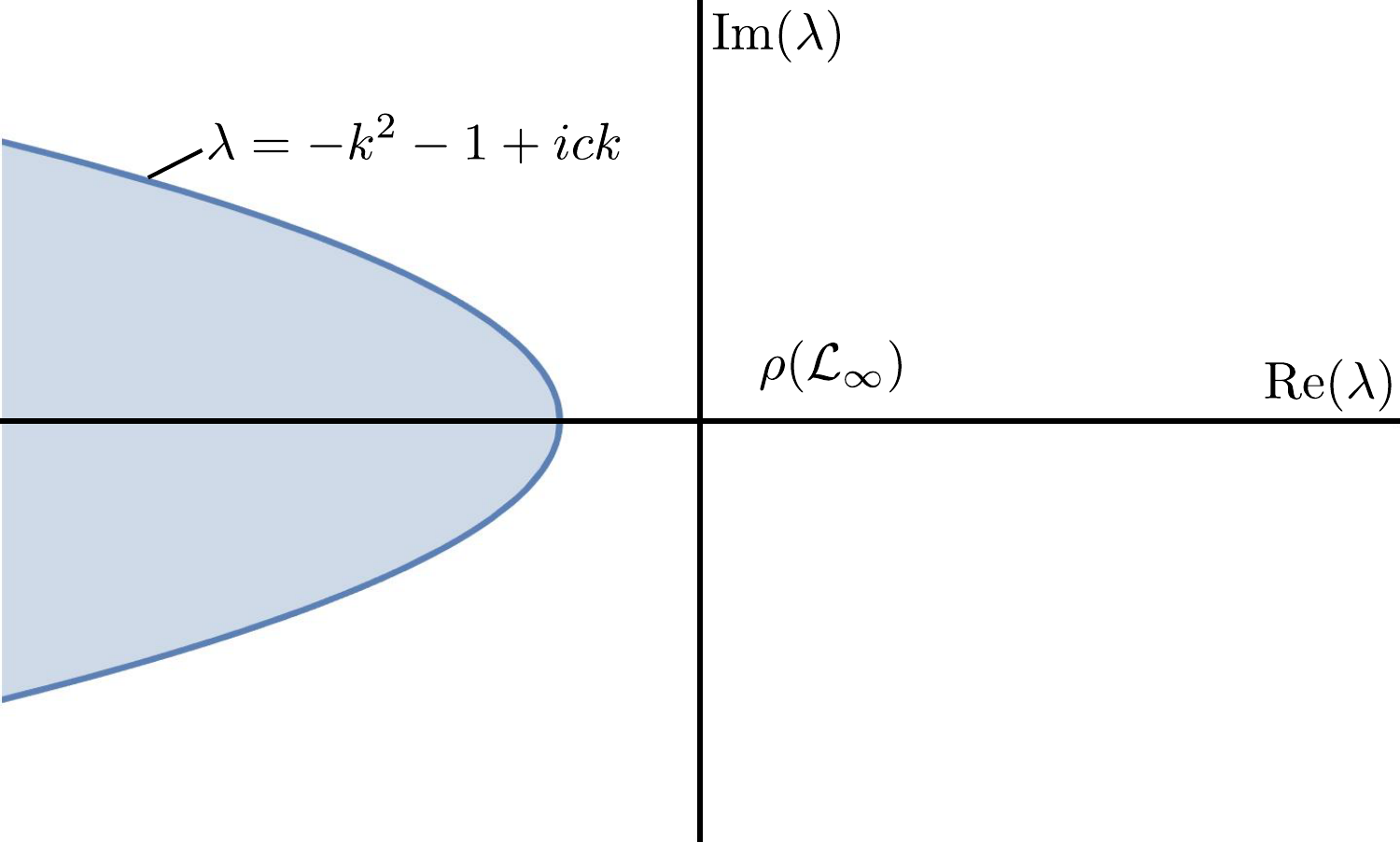}
\caption{A sketch of the essential spectrum, $\sige{(\cL_\infty)},$ (shaded region, blue online) of the operator $\cL_\infty$ and consequently of $\cL$. The operator is not Fredholm on the boundary of the essential spectrum, and has non-zero index in the interior.
It is invertible in the region to the right of the essential spectrum, $\rho(\cL_\infty)$. 
}
\label{fig:essential}
\end{figure}

For $\lambda \in \rho(\cL_\infty)$, the inverse of $\cL_\infty - \lambda$ on $L^2(\R^-)$ is given by integration against the Green's function: 
\begin{equation}\label{eq:green}
G(x;y,\lambda) :=
\begin{cases}
\dfrac{2 e^{\frac{1}{2}\left(x\sqrt{c^2+4 \lambda +4}+c(y-x)\right)} \sinh\left(\dfrac{y}{2}  \sqrt{c^2+4 \lambda+4}\right)}{\sqrt{c^2+4\lambda +4}}\,, & -\infty<x< y<0\,,  \\[4mm]
\dfrac{2 e^{\frac{1}{2} \left(y\sqrt{c^2+4 \lambda +4}+ c(y-x)\right)} \sinh \left(\dfrac{x}{2} \sqrt{c^2+4 \lambda+4}\right)}{\sqrt{c^2+4\lambda +4}}\,, & -\infty<y < x<0\,. 
   \end{cases}
\end{equation}

\subsection{Point spectrum} \label{sec:point} As the essential spectrum of $\cL$ is completely contained in the left half plane, spectral stability of the operator $\cL$ will be established by showing that there is no point spectrum of the operator $\cL$ on $L^2(\R^-)$ with $\re(\lambda) >0$. The {\em point spectrum} of the operator $\cL$ is defined as the set of $\lambda \in \CC$ such that $\cL-\lambda$ is Fredholm with index zero, but is not invertible. In our case, this will be a discrete set of values in the resolvent set of $\cL_\infty$. As such, we will sometimes refer to these $\lambda$ as {\em eigenvalues}, with the corresponding solutions as {\em eigenfunctions}. The absence of point spectrum in the right half plane will follow from the proof of the classical Sturm comparison theorem applied to a related equation, and adapted to our case. 

Recalling \cref{eq:bvp}, we set 
{\color{black} 
$q(z) := p(z) e^{\frac{c}{2}z}
$}}, and 
we have the following lemma: 
\begin{lemma}\label{lem:eq}
If $p(z)$ is a solution in $L^2(\R^-)$ to the boundary value problem given in \cref{eq:bvp} with $\lambda$ away from the essential spectrum of $\cL$, then $q$ will be a solution in $L^2(\R^-)$ to the boundary value problem 
\begin{equation}\label{eq:qeq}
\begin{split}
q_{zz}  &+ \left(1-\frac{c^2}{4} - 2 \bar{u}_c\right)q - \lambda q =0\,,  \qquad
q(-\infty) =0\,, \quad \textnormal{and} \quad q(0) = 0.
\end{split}
\end{equation}
\end{lemma}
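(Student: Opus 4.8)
The plan is to carry out the classical Liouville (Sturm normal-form) transformation that removes the advection term $c p_z$ from \cref{eq:bvp}, and then to check the two boundary conditions and the $L^2$ membership separately.

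First I would substitute $p(z) = q(z)\,e^{-\frac{c}{2}z}$ directly into the ODE $\cL p = \lambda p$ of \cref{eq:bvp}. A one-line computation gives
$$
p_z = \left(q_z - \tfrac{c}{2}q\right)e^{-\frac{c}{2}z}, \qquad p_{zz} = \left(q_{zz} - c q_z + \tfrac{c^2}{4}q\right)e^{-\frac{c}{2}z},
$$
so that $p_{zz} + c p_z = \left(q_{zz} - \tfrac{c^2}{4}q\right)e^{-\frac{c}{2}z}$; the first-order term cancels. Substituting this into $\cL p - \lambda p = 0$ and dividing through by the non-vanishing factor $e^{-\frac{c}{2}z}$ yields exactly \cref{eq:qeq}. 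This is the only genuine computation in the lemma.

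Next I would dispatch the boundary conditions and integrability. At $z=0$ we get $q(0) = p(0)\,e^{0} = 0$ at once. For the $L^2$ claim, on $\R^- = (-\infty,0]$ we have $0 < e^{\frac{c}{2}z} \le 1$ because $c>0$, hence $|q(z)| \le |p(z)|$ pointwise and $\|q\|_{L^2(\R^-)} \le \|p\|_{L^2(\R^-)} < \infty$. The only condition needing a remark is $q(-\infty)=0$: since $\lambda$ is taken away from the essential spectrum of $\cL$ (equivalently, in the resolvent set of $\cL_\infty$, i.e.\ strictly to the right of the parabola \cref{eq:conspec}), the constant-coefficient limiting equation $p_{zz}+cp_z-p=\lambda p$ has no purely oscillatory solutions on $\R^-$, so it possesses an exponential dichotomy there; because $\bar u_c$ decays exponentially as $z\to-\infty$, the variable-coefficient equation inherits this dichotomy, and the $L^2(\R^-)$ solution $p$ must therefore decay exponentially as $z\to-\infty$ (consistent with the hypothesis $p_z(-\infty)=0$, and in particular $p$ is bounded near $-\infty$). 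Multiplying the bounded quantity $p(z)$ by $e^{\frac{c}{2}z}\to 0$ then gives $q(-\infty)=0$, completing the verification.

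I do not anticipate a real obstacle: the substance is the Sturm--Liouville normal-form substitution, which is routine, and the exponential-dichotomy fact used for $q(-\infty)=0$ is standard (see, e.g., the Fredholm-theory setup already cited from \cite{KP13}). The one point worth flagging is that the map $p \mapsto q = p\,e^{\frac{c}{2}z}$ is \emph{not} an isometry of $L^2(\R^-)$ — its inverse $q \mapsto p = q\,e^{-\frac{c}{2}z}$ is unbounded — so the converse implication, and the exact relationship between the essential spectra of \cref{eq:bvp} and \cref{eq:qeq}, would require more care; however the lemma as stated only asserts the forward direction, for which the boundedness of $e^{\frac{c}{2}z}$ on $\R^-$ is all that is needed.
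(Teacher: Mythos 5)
Your proposal is correct and follows essentially the same route as the paper: the ODE in \cref{eq:qeq} is obtained by the direct substitution $q = p\,e^{\frac{c}{2}z}$, and the Dirichlet condition at $-\infty$ is justified by noting that an $L^2(\R^-)$ solution $p$ must decay (asymptotically like the decaying solutions of the far-field equation), so that $q$ decays as well. Your additional observations about $q(0)=0$, the pointwise bound $|q|\le|p|$ on $\R^-$, and the non-invertibility of the weighting are consistent with, and slightly more explicit than, the paper's one-line argument.
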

Note, in particular, the change in the boundary condition. 
\begin{proof} Deriving the ODE is via a direct computation. 
That we can use a Dirichlet boundary condition on the left (i.e. at $-\infty$) follows from the fact that if $p$ is a solution to \cref{eq:bvp} in $L^2(\R^-)$, then it must decay to zero, and so in particular, if $p_z \to 0$, then so must $p$. In fact, $p \to 0$ as $z \to -\infty$ in only one way, namely asymptotically to decaying solutions of the far-field equation $\cL_\infty p = \lambda p$. \end{proof}

\begin{rem}We could have applied the boundary condition argument in \cref{lem:eq} directly to the boundary conditions of the perturbation in \cref{eq:bvp}, and shown that we only need Dirichlet boundary conditions there as well.\end{rem}

Applying the contrapositive of \cref{lem:eq} we have that if we can show that there are no solutions to \cref{eq:qeq}, with non-negative $\lambda$, then there will be none to \cref{eq:bvp}. 
 First, we observe that by the same considerations as for the operator $\cL$, the essential spectrum of the operator in \cref{eq:qeq} will be the set $\lambda \in (-\infty, -1-c^2/4]$. As this set is its own boundary in $\CC$, we note that it will also be the continuous spectrum. 
Next, we note that \cref{eq:qeq} is a self-adjoint eigenvalue problem, and so immediately we have that any point spectrum of the operator $\mathcal{L}$ in \cref{eq:bvp} must be real-valued.

To compute the point spectrum (or lack thereof), we reproduce the proof of the Sturm-oscillation theorem adapted to the negative half line (see for example \cite[Ch.~8, Thm~1.2]{codlev55}). We first introduce the so-called Pr\"ufer coordinates, which are polar coordinates in the dependent variables. 
We set $q(z) =: r(z) \sin(\q(z)) $ and $q_z(z) = r(z) \cos(\q (z))$ so \cref{eq:qeq} becomes:
\begin{subequations}\label{eq:bigpolar}
\begin{align}
r' & = \left(\frac{c^2}{4} + 2 \bar{u}_c + \lambda\right) \sin(\q) \cos(\q)\,, \label{eq:rad} \\
\q' & = \cos^2(\q) +\left(1 - \frac{c^2}{4}- 2\bar{u}_c - \lambda\right) \sin^2(\q) {\color{black}{= 1-\left( \frac{c^2}{4}+ 2\bar{u}_c + \lambda\right) \sin^2(\q). }}  \label{eq:polar}
\end{align}
\end{subequations}

We have the following theorem: 
\begin{theorem}[Sturm Oscillation theorem]\label{th:sturmosc}
Suppose $0<\lambda_1<\lambda_2$. We have that the Pr\"ufer angle coordinate, $\q(z;\lambda)$ satisfies:
$$
 \q(z;\lambda_2) \leq \q(z;\lambda_1). 
$$
for each $z \in (-\infty,0]$.
\end{theorem}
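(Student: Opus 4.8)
The plan is to adapt the Pr\"ufer-angle argument behind the classical Sturm oscillation theorem to the half-line $(-\infty,0]$; the one genuinely new point is the treatment of the endpoint at $z=-\infty$.

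\emph{Fixing the branch at $-\infty$.} By \cref{lem:eq} and the discussion of the far-field operator, any $q\in L^2(\R^-)$ solving \cref{eq:qeq} decays to $0$ as $z\to-\infty$ asymptotically to the unique decaying solution of the far-field equation $q_{zz}-\big(1+\tfrac{c^2}{4}+\lambda\big)q=0$, namely $e^{\nu(\lambda)z}$ with $\nu(\lambda):=\sqrt{1+\tfrac{c^2}{4}+\lambda}>0$ for every $\lambda>0$. Hence $q_z/q=\cot\q\to\nu(\lambda)$, so I pin the branch of the Pr\"ufer angle by requiring
\[
\q(z;\lambda)\;\longrightarrow\;\q_{-\infty}(\lambda):=\operatorname{arccot}\!\sqrt{\,1+\tfrac{c^2}{4}+\lambda\,}\in\big(0,\tfrac{\pi}{2}\big)\qquad\text{as }z\to-\infty .
\]
Since $\nu$ is strictly increasing in $\lambda$ and $\operatorname{arccot}$ strictly decreasing, $\q_{-\infty}$ is strictly decreasing in $\lambda$; in particular $\q_{-\infty}(\lambda_2)<\q_{-\infty}(\lambda_1)$.

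\emph{Comparison on a finite interval.} Put $\Theta_i(z):=\q(z;\lambda_i)$. From the limit above there are $z_*<0$ and $\delta>0$ with $\Theta_1(z)-\Theta_2(z)\ge\delta$ for all $z\le z_*$. On the compact interval $[z_*,0]$, \cref{eq:polar} together with $\sin^2\Theta_2\ge0$ and $\lambda_2>\lambda_1$ gives
\[
\Theta_2'=1-\Big(\tfrac{c^2}{4}+2\bar u_c+\lambda_2\Big)\sin^2\Theta_2\;\le\;1-\Big(\tfrac{c^2}{4}+2\bar u_c+\lambda_1\Big)\sin^2\Theta_2=:f_1(z,\Theta_2),
\]
so $\Theta_2$ is a subsolution of $\q'=f_1(z,\q)$, of which $\Theta_1$ is a solution; since $\bar u_c$ is bounded on $(-\infty,0]$, $f_1$ is Lipschitz in $\q$ uniformly on $[z_*,0]$, say with constant $L$. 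With $\Theta_2(z_*)<\Theta_1(z_*)$, the scalar comparison principle applies: if $w:=\Theta_1-\Theta_2$ were negative at some $z_1\in(z_*,0]$, let $z_0$ be the last zero of $w$ before $z_1$; on $(z_0,z_1]$ one has $w<0$ and $w'\ge f_1(z,\Theta_1)-f_1(z,\Theta_2)\ge L\,w$, whence $(e^{-Lz}w)'\ge0$, and $w(z_0)=0$ forces $w\ge0$ on $[z_0,z_1]$, contradicting $w(z_1)<0$. Thus $\Theta_2\le\Theta_1$ on $[z_*,0]$, and combined with $\Theta_2<\Theta_1$ on $(-\infty,z_*]$ this gives $\q(z;\lambda_2)\le\q(z;\lambda_1)$ for all $z\in(-\infty,0]$.

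\emph{An equivalent route} (closer in spirit to the Maslov/rotation-number viewpoint): differentiate \cref{eq:polar} in $\lambda$, obtaining for $\eta:=\partial_\lambda\q$ the linear equation $\eta'+\big(\tfrac{c^2}{4}+2\bar u_c+\lambda\big)\sin(2\q)\,\eta=-\sin^2\q$. Multiplying by the positive integrating factor $\mu(z):=\exp\!\big(\int_0^z(\tfrac{c^2}{4}+2\bar u_c+\lambda)\sin(2\q)\,ds\big)$ yields $(\mu\eta)'=-\mu\sin^2\q\le0$, so $\mu\eta$ is non-increasing. Since $\mu(z)\to0$ and $\eta(z)\to\partial_\lambda\q_{-\infty}(\lambda)$, which is finite, as $z\to-\infty$, one gets $\mu\eta\le0$ and hence $\partial_\lambda\q\le0$ everywhere, which is the claim.

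\emph{Main obstacle.} I expect the delicate step to be the endpoint $z=-\infty$, the only place where the half-line geometry really enters: one must justify that $\q(z;\lambda)$ actually has the stated limit (equivalently, that the eigenfunction decays in exactly one way --- the content of \cref{lem:eq}) and that this limit depends on $\lambda$ in a $C^1$, strictly decreasing manner, so that a uniform initial gap $\delta>0$ (or, in the variational version, a well-defined finite value $\partial_\lambda\q_{-\infty}(\lambda)$ together with the decay $\mu(z)\to0$) is available from which to start the argument. Everything on finite intervals is the textbook Sturm comparison; no zero-counting is needed for this particular statement, though the stronger Sturm conclusions would follow by additionally tracking the number of multiples of $\pi$ swept out by $\Theta_i$ on $(-\infty,0)$.
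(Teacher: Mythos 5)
Your proposal is correct and follows essentially the same route as the paper: you pin the Pr\"ufer angle's limit at $z=-\infty$ (your $\operatorname{arccot}\sqrt{1+c^2/4+\lambda}$ agrees with the paper's $\arctan\bigl(2/\sqrt{c^2+4(\lambda+1)}\bigr)$), use its strict monotonicity in $\lambda$ to secure an initial ordering on $(-\infty,z_*]$, and then propagate that ordering forward with a Gronwall/integrating-factor comparison, which is exactly the paper's argument. Your alternative route via $\eta=\partial_\lambda\q$ is a clean infinitesimal variant of the same monotonicity computation rather than a genuinely different proof.
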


\begin{proof}
For a given solution $\varphi$ of \cref{eq:qeq} we have corresponding polar solutions $\varphi(z) = r(z) \sin\q(z)$ to \cref{eq:bigpolar} and further $\varphi(z)$ can only vanish when $\q(z)$ is an integer multiple of $\pi$. We also have that for $\lambda >0$, the solution which satisfies the left boundary condition satisfies
\begin{equation}\label{eq:endcond}
\lim_{z\to -\infty} \q(z;\lambda) =: \q_{-\infty}(\lambda) = \arctan\left(\frac{2}{\sqrt{c^2 + 4(\lambda + 1)}} \right)\,,
\end{equation}
which we note is a decreasing function of $\lambda$, bounded above for $\lambda \in [0,\infty)$ by its value at $\lambda = 0,$ and bounded below by $0$.

Fixing a $\lambda_2 > \lambda_1 >0$ and subtracting the solution  to \cref{eq:polar} satisfying the left boundary condition when $\lambda= \lambda_2$ from the solution to \cref{eq:polar} when $\lambda =\lambda_1$ (again, satisfying the left boundary condition) gives 
\begin{equation}\label{eq:diff}
\begin{split}
(\q(z;\lambda_1) - \q(z;\lambda_2))'  & =  (\lambda_2-\lambda_1) \sin^2 (\q(z;\lambda_2)) -\left(\frac{c^2}{4} + 2\bar{u}_c(z)\right)\left(\sin(\q(z;\lambda_1) + \sin(\q(z;\lambda_2)) \right) \\
&\qquad  \times \left( \frac{ \sin(\q(z;\lambda_1)) -\sin(\q(z;\lambda_2))}{\q(z;\lambda_1) - \q(z;\lambda_2)} \right)(\q(z;\lambda_1)-\q(z;\lambda_2)).
\end{split} 
\end{equation}
If we set $y(z)= \q(z;\lambda_1)-\q(z;\lambda_2)$, and denoting 
$$ 
m(z) := -\left(\frac{c^2}{4} + 2\bar{u}_c(z)\right)\left(\sin(\q(z;\lambda_1) + \sin(\q(z;\lambda_2)) \right) \left( \frac{ \sin(\q(z;\lambda_1)) -\sin(\q(z;\lambda_2))}{\q(z;\lambda_1) - \q(z;\lambda_2)} \right),
$$
\cref{eq:diff} becomes
$$
y' = (\lambda_2-\lambda_1) \sin^2(\q(z;\lambda_2)) + m(z)y.
$$
As $\lambda_2-\lambda_1 >0$, and $m$ is uniformly bounded, this means that we have 
\begin{equation}\label{eq:ineq}
y'(z) - m(z) y(z) \geq 0 \quad \textrm{for all } z \in (-\infty,0].
\end{equation}  
Setting $M(z) := \int_{z}^0 m(s) ds$ for $z \leq 0$, and multiplying by $e^M$, we have that \cref{eq:ineq} becomes 
\begin{equation}\label{eq:good}
\frac{d}{dz}\left(e^{M(z)} y(z)\right) \geq 0. 
\end{equation}
Integrating \cref{eq:good} on the interval $(-L,z)$, with $-L<z\leq0$, we have that 
\begin{equation}\label{eq:good2}
e^{M(z)}y(z) \geq e^{M(-L)}y(-L).  
\end{equation}
Now we choose $-L_*$ so that for all $z \leq -L_*$ we have that $\q(z;\lambda_1) > \q(z;\lambda_2)$. This is possible because the limiting function of the Pr\"ufer angle, \cref{eq:endcond} is a positive decreasing function of $\lambda$ bounded below by $0$.

Because of \cref{eq:good,eq:good2}, we have that $e^My$ is a positive, increasing function on $(-L_*,0]$, which is bounded below by $0$, and so passing to the limit we arrive at the fact that for all $z \in (-\infty,0], y(z) \geq 0$. 
\end{proof}

Now we apply \Cref{th:sturmosc} to the triple $0<\lambda <\lambda_\infty$ where $\lambda_\infty$ is chosen so that $\lambda_\infty \gg 1$, and so we have that for $\lambda \in [0,\lambda_\infty]$:
\begin{equation}\label{eq:supergood}
\q(z;\lambda_\infty) \leq \q(z;\lambda) \leq \q(z;0).
\end{equation}
The idea is to squeeze the angular coordinate between two solutions that we know. Namely, the asymptotic one for $\lambda_\infty \gg 1$, which is given by $q \sim e^{\sqrt{\lambda_\infty} z}$, and the solution to the variational equation when $\lambda = 0$. Neither of these solutions has a zero, and moreover, neither of these solutions ever has a polar angle which is a multiple of $\pi$. Moreover, for any $\lambda > \lambda_\infty$, we know that no solutions can have polar angles which are multiples of $\pi$ either (because $q \sim e^{\sqrt{\lambda} z})$. See \cref{fig:anglesandwich}. Thus, the solution satisfying the left boundary condition can never satisfy the right boundary condition of \cref{eq:qeq}, for any $z \in (-\infty,0]$, let alone at $z = 0$. We can thus conclude that there is no point spectrum to the operator $\cL$ with $\re{\lambda} \geq 0$, and so the (asymptotic) spreading solution is spectrally stable. 

\begin{figure}
\includegraphics[scale=1]{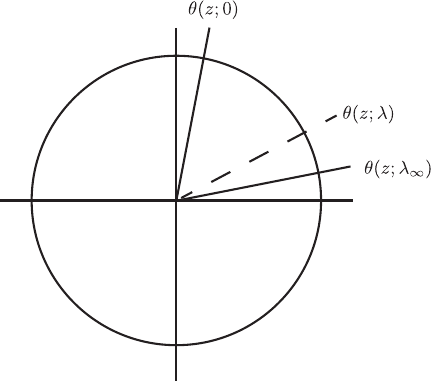}
\caption{The angular variable $\q(z;\lambda)$ for a general $\lambda >0$ is bounded for each $z \in (-\infty, 0]$ by the solutions when $\lambda = 0$ and for a large $\lambda_\infty \gg 0$. As neither of these solutions are ever an integer multiple of $\pi$, neither is the solution for any $\lambda$ in the interval $[0, \lambda_\infty]$. This leads to the conclusion that there is no point spectrum of \cref{eq:qeq}, and subsequently \cref{eq:bvp} with $\lambda \geq0$.}
\label{fig:anglesandwich}
\end{figure}

\section{Nonlinear stability} \label{sec:nonlinear}
Up until now we have been considering the spectrum of our operator $\cL$ from \cref{eq:bvp} as an operator on the Hilbert space $X = L^2(\R^-)$. However in order to discuss nonlinear stability from the framework of \cite{Hen80}, we need to restrict our perturbations to so-called {\em fractional power spaces} of the space $X$ with respect to the (sectorial) operator $\cL$. In particular, we will show nonlinear stability for perturbations in the space
$$
X^{\frac{1}{2}} = \bbH_0^1(\R^{-}) := \left\{ \phi\in \bbH^1(\R^-) \quad | \quad \phi(0) = 0 \right\}.
$$

Nonlinear stability of $\bar{u}_c(z)$ as a solution to \cref{eq:trav} in $ \bbH_0^1(\R^{-})$  will follow from combining several general results. Specifically, {\color{black}{it follows from}} the fact that our linear operator is sectorial and from Theorems 1.3.2 and 5.1.1 from \cite{Hen80}, which we reproduce in our notation for clarity. 

In \cite{Hen80} an operator $A$ is called {\em sectorial} if it is a closed, densely defined operator such that  for some $\phi \in (0,\pi/2)$ and some $M \ge 1$, the sector 
$$
S_{0,\phi} = \left\{ \lambda \quad \bigg{|} \quad |  arg (\lambda)| \leq  \phi + \frac{\pi}{2} \leq \, \lambda \neq 0 \right \} 
$$
is in the resolvent set of $A$ and (in the operator norm):
$$
|| (\lambda - A)^{-1} || \leq \frac{M}{|\lambda|}.
$$
We note here that we have flipped the signs of the sector from \cite{Hen80}, so that the resolvent is in the right half plane, however, this does not affect any of the arguments other than changing the signs where appropriate. We also note that in \cite{Hen80}, the sector need not be based at the origin, but as all of our sectors will be based at the origin, we will not need this extra flexibility. 

That $\cL$ is sectorial is due to Theorem 1.3.2 in \cite{Hen80} which says that:
\begin{theorem}[Theorem 1.3.2 from \cite{Hen80}] Suppose that $A$ is a sectorial operator {\color{black}{with sector $S$}} and $||A(\lambda - A)^{-1}|| \leq C$ for some positive $C$, and for $\lambda \in S$ and $|\lambda|$ large enough. Suppose that $B$ is a linear operator with the domain of $B, D(B)$, containing the domain of $A$, $D(A)$, and with $||B u|| \leq \ve ||A u|| + K ||u||$ for all $u \in D(A)$, and $\ve$, $K$ are positive constants with $\ve C <1$. Then $A + B$ is sectorial. 
\end{theorem}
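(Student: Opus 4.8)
The plan is to treat $\lambda - (A+B)$ as a relatively bounded perturbation of $\lambda - A$ that can be inverted by a Neumann series, exactly as in the classical Kato--Rellich-type perturbation theorems. The starting point is the algebraic identity, valid on $D(A+B)=D(A)$,
\begin{equation}\label{eq:pertid}
\lambda - (A+B) = \bigl(I - B(\lambda - A)^{-1}\bigr)(\lambda - A),
\end{equation}
so that $\lambda\in\rho(A+B)$ as soon as $I - B(\lambda - A)^{-1}$ is boundedly invertible on $X$, in which case $(\lambda-(A+B))^{-1} = (\lambda-A)^{-1}\bigl(I - B(\lambda-A)^{-1}\bigr)^{-1}$.

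The key estimate is for $\|B(\lambda-A)^{-1}\|$. Given $u\in X$, put $v=(\lambda-A)^{-1}u\in D(A)$; the relative bound on $B$ together with the sectorial estimates $\|(\lambda-A)^{-1}\|\le M/|\lambda|$ and $\|A(\lambda-A)^{-1}\|\le C$ on the sector $S$ of $A$ give
\begin{equation}\label{eq:Bres}
\|B(\lambda-A)^{-1}u\| \le \ve\,\|A(\lambda-A)^{-1}u\| + K\,\|(\lambda-A)^{-1}u\| \le \Bigl(\ve C + \tfrac{KM}{|\lambda|}\Bigr)\|u\|.
\end{equation}
Since $\ve C<1$, for $\lambda\in S$ with $|\lambda|$ larger than some $R_0$ (one may take $R_0 = 2KM/(1-\ve C)$) the right-hand coefficient in \eqref{eq:Bres} is bounded by $q_0:=(1+\ve C)/2<1$. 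Hence $I - B(\lambda-A)^{-1}$ is invertible there with $\|(I-B(\lambda-A)^{-1})^{-1}\|\le (1-q_0)^{-1}$, and \eqref{eq:pertid} yields $\lambda\in\rho(A+B)$ with the uniform bound $\|(\lambda-(A+B))^{-1}\| \le M'/|\lambda|$, $M' := M/(1-q_0)$, on the truncated sector $\{\lambda\in S : |\lambda|\ge R_0\}$.

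It remains to package this into the definition of a sectorial operator. The domain $D(A+B)=D(A)$ is dense because $A$ is sectorial, and since $\rho(A+B)\neq\varnothing$ the operator $A+B$ is automatically closed (the inverse of a bounded operator is closed); one may alternatively check closedness directly, using that the relative bound $\ve<1$ forces $Au_n$ to be Cauchy whenever $u_n$ and $(A+B)u_n$ converge. Finally, the resolvent estimate has so far been obtained only away from a bounded neighbourhood of the vertex; to absorb the excised region one shifts the vertex, choosing a large real $a$ and a slightly smaller half-angle $\phi'\in(0,\phi)$ so that the translated sector $a+S_{0,\phi'}$ lies inside $\{\lambda\in S : |\lambda|\ge R_0\}$, on which $\|(\lambda-(A+B))^{-1}\|\le M'/|\lambda - a|$ then holds. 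This exhibits $A+B$ as sectorial in the (vertex-flexible) sense of \cite{Hen80}.

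The genuinely routine steps are \eqref{eq:pertid}--\eqref{eq:Bres} and the Neumann series; I expect the only slightly delicate point to be this last one --- the elementary planar bookkeeping of choosing $a$ and $\phi'$ so that the shifted sector sits inside the region where the resolvent bound is valid, together with the cosmetic replacement of $|\lambda|$ by $|\lambda-a|$ in the denominator. I would also note in passing that the hypothesis $\|A(\lambda-A)^{-1}\|\le C$ is not an additional assumption, since $A(\lambda-A)^{-1}=\lambda(\lambda-A)^{-1}-I$ already forces $\|A(\lambda-A)^{-1}\|\le M+1$ from the sectoriality of $A$ alone; the real content of the hypotheses is the smallness condition $\ve C<1$.
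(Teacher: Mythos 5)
Your argument is correct, and it is essentially the standard (indeed Henry's own) proof of this result: factor $\lambda-(A+B)=\bigl(I-B(\lambda-A)^{-1}\bigr)(\lambda-A)$, bound $\|B(\lambda-A)^{-1}\|$ by $\ve C+KM/|\lambda|<1$ for $|\lambda|$ large in the sector, invert by Neumann series, and recover a full sector by translating the vertex. The paper itself offers no proof --- the theorem is quoted verbatim from \cite{Hen80} --- so there is nothing to compare against beyond noting that your treatment of the one genuinely fiddly point (the conclusion is sectoriality with a possibly shifted vertex, not a sector based at the origin) is the honest one, and your closing remark that $\|A(\lambda-A)^{-1}\|\leq M+1$ follows automatically from sectoriality is also correct.
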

We apply this theorem with $A = \cL_\infty$ and $B = \cL-\cL_\infty$. We have that a direct computation
from the Green's function in \cref{eq:green} gives that $\cL_\infty$ is sectorial with resolvent bound $M = 1$. Thus we have that 
$$
|| \cL_\infty (\lambda - \cL_\infty)^{-1} || \leq || \bbI + \lambda(\lambda - \cL_\infty)^{-1} ||  \leq 1+ M = 2.
$$
So we have $C =2$ in the hypotheses of the theorem. Next, we have that $B = \cL - \cL_\infty = 2(1- \bar{u}_c)$, is simply multiplication by a function which is bounded above by $2$, and so $||B u|| \leq 2 ||u|| $, so here we have $\ve = 0$, and so we can conclude that $\cL$ is sectorial as well. 

Nonlinear stability will then follow from the following theorem of Henry, which we have written in our notation: \\
\begin{theorem}[Theorem 5.1.1 from \cite{Hen80}]\label{th:nonlinstab}
Suppose that the linearisation of \cref{eq:trav} about the asymptotic spreading solution $\bar{u}_c(z)$ produces an operator $\cL$ which is sectorial and has no spectrum to the right of the line $x = \beta <0$ in the complex plane. Then $\bar{u}_c(z)$ is also nonlinearly stable as well, in the sense that there exists a $\nu$ and $M$ such that if $||V_0(z)-\bar{u}_c(z)||_{\bbH_0^1(\R^-)} \leq \nu/2M$, then there exists a unique solution $V(z,t,V_0)$  of \cref{eq:trav} existing for $t > 0 $, satisfying and with  $V(z,0,V_0) = V_0(z)$ and satisfying for all $t>0$,
$$
||V(z,t,V_0) - \bar{u}_c(z)||_{\bbH_0^1(\R^-)} \leq 2 M e^{-\beta t}||V_0(z)-\bar{u}_c(z)||_{\bbH_0^1(\R^-)}.
$$
\end{theorem}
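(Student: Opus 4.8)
The plan is to recast \cref{eq:trav} as an abstract semilinear parabolic equation on $X=L^2(\R^-)$ and then apply the quoted theorem of Henry essentially verbatim, after checking its hypotheses in our setting: sectoriality of the linearisation, a strictly negative spectral gap, and local Lipschitz control of the nonlinearity on $X^{1/2}=\bbH^1_0(\R^-)$.

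First I would substitute $U=\bar u_c+w$ into \cref{eq:trav}, which yields $w_\tau=\cL w+N(w)$ with $\cL$ the operator of \cref{eq:bvp} and $N(w):=-w^2$, the boundary conditions $w_z(-\infty)=w(0)=0$ being absorbed into the domain of $\cL$. Since $N(0)=0$ and the Fr\'echet derivative $DN(0)=0$, the linearisation about the equilibrium $w=0$ is exactly $\cL$, as the statement requires. I would then verify that $N\colon X^{1/2}\to X$ is locally Lipschitz: in one space dimension $\bbH^1_0(\R^-)\hookrightarrow L^\infty(\R^-)$, so for $w,v\in\bbH^1_0(\R^-)$ one has $\|N(w)-N(v)\|_{L^2}=\|(w-v)(w+v)\|_{L^2}\le\|w-v\|_{L^\infty}\|w+v\|_{L^2}\le C\|w-v\|_{\bbH^1}\bigl(\|w\|_{\bbH^1}+\|v\|_{\bbH^1}\bigr)$; in particular the Lipschitz constant of $N$ on the ball $\{\|w\|_{\bbH^1}\le\rho\}$ is $O(\rho)$ and vanishes as $\rho\to0$, which is precisely the regularity needed to run the Duhamel/local-existence machinery of \cite{Hen80} for stability of an equilibrium.

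Second, the other two hypotheses are already in place from earlier in the paper. Sectoriality of $\cL$ was established above by writing $\cL=\cL_\infty+(\cL-\cL_\infty)$ and invoking Theorem 1.3.2 of \cite{Hen80}. The spectral gap comes from \cref{sec:spreading}: the essential spectrum lies in $\{\re\lambda\le-1\}$ by \cref{eq:conspec}; the $q$-equation \cref{eq:qeq} is self-adjoint, so the point spectrum is real and confined to the right of the Fredholm border; and by \Cref{th:sturmosc} with the sandwich bound \cref{eq:supergood} there is no point spectrum with $\re\lambda\ge0$. Hence the point spectrum is a discrete — therefore finite — subset of the bounded interval $(-1,0)$, and we may fix $\beta$ with $\sup\{\re\lambda:\lambda\in\sigma(\cL)\}<\beta<0$. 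Sectoriality then delivers the analytic-semigroup estimates $\|e^{\cL\tau}\|_{X\to X}\le Me^{\beta\tau}$ and $\|e^{\cL\tau}\|_{X\to X^{1/2}}\le M\tau^{-1/2}e^{\beta\tau}$.

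With these three inputs, the conclusion is exactly Theorem 5.1.1 of \cite{Hen80} applied to the $w$-equation, giving for $w_0=V_0-\bar u_c$ small in $\bbH^1_0(\R^-)$ a unique global mild solution with $\|w(\tau)\|_{\bbH^1_0(\R^-)}\le 2Me^{\beta\tau}\|w_0\|_{\bbH^1_0(\R^-)}$. To reproduce (rather than cite) the argument, one runs a contraction mapping for the Duhamel formula $w(\tau)=e^{\cL\tau}w_0+\int_0^\tau e^{\cL(\tau-s)}N(w(s))\,ds$ in the weighted space $\{\,\sup_{\tau\ge0}e^{-\beta'\tau}\|w(\tau)\|_{X^{1/2}}<\infty\,\}$ for $\beta<\beta'<0$, using the smoothing estimate above, the finiteness of $\int_0^\infty r^{-1/2}e^{\beta r}\,dr$, and the local Lipschitz bound, then absorbs the quadratic term by a Gronwall-type argument that closes precisely because $\|w_0\|_{X^{1/2}}$ is small. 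The main obstacle is exactly this last balancing step — matching the power $(\tau-s)^{-1/2}$, the exponential weights, and the smallness of $w_0$ so that $N(w)=-w^2$ is genuinely dominated by the linear decay — but for the quoted theorem this is Henry's proof, and the only problem-specific facts one must supply are the local Lipschitz estimate for $-w^2$ on $\bbH^1_0(\R^-)$ and the existence of the negative gap $\beta$, both obtained above.
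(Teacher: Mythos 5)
Your proposal is correct and follows essentially the same route as the paper: verify that $\cL$ is sectorial (via Theorem 1.3.2 of \cite{Hen80} applied to $\cL = \cL_\infty + 2(1-\bar{u}_c)$), confirm the spectral gap from the essential- and point-spectrum analysis of \cref{sec:spreading}, and then invoke Henry's Theorem 5.1.1. Your additional check that $N(w)=-w^2$ is locally Lipschitz from $\bbH^1_0(\R^-)$ to $L^2(\R^-)$ via the Sobolev embedding is a hypothesis of Henry's theorem that the paper leaves implicit, and is a worthwhile supplement rather than a departure.
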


Nonlinear stability of the zero solution in the vanishing case follows from the same considerations. In this case however, the perturbations are also required to satisfy the boundary conditions, so they must be in 
$$\bbH^1_0((0,h_\infty)) := \left\{\phi\in \bbH^1(0,h_\infty) \quad | \quad  \phi(0) = \phi(h_\infty) = 0 \right\}.$$
Indeed, the operator in \cref{eq:linvan} is self-adjoint, so the results on nonlinear stability are somewhat more classical, though they are still covered in the same framework as {\color{black}{presented here, which is based on}} \cite{Hen80}.

\section{Discussion}\label{sec:disc}
We have shown nonlinear stability relative to $\bbH^1$ perturbations satisfying the boundary conditions of the asymptotic solutions to the Fisher-Stefan problem, both in the case of vanishing solutions, as well as in the case of spreading solutions. 
We note that while we have confined ourselves to an explicit set of parameter values in \cref{eq:fkppstef} for the purposes of exposition, all of the arguments can be readily adapted, {\em mutatis mutandis}, to the more general case of arbitrary parameters. We also note that it is possible to show nonlinear stability of these operators subject to perturbations with less regularity via the use of so-called interpolation spaces (see, for example \cite{KD92}), but for many applied mathematics purposes, such as numerical simulation, $\bbH^1$ regularity is sufficient. 

We also note that the basin of attraction of the zero solution, as a solution to the PDE in \cref{eq:van} as prescribed by \Cref{th:nonlinstab}, can contain initial data which is non-positive. This is not new, indeed \cite{Hen80} explicitly computes stability of this solution in his exercises.  The spectrum moving into the right half plane due to the lengthening of the interval describes the bifurcation when the instability of zero as a solution to the logistic equation dominates the `stabilising' effects of diffusion. 

The essential ingredients of the argument in \cref{sec:point} will also work for solutions to the Fisher-KPP equation on the line, \cref{eq:fkppline}, when $c\geq2$, though again, the argument needs to be adapted to the full-line, and moreover, weighted spaces need to be considered as in that case, the essential spectrum enters into the right half plane \cite{HvH15}.  The boundary value problem in \cref{eq:twode}, in fact does not have solutions when $c\geq 2$ as then the solution satisfying the left boundary condition at $-\infty$ will never be zero. 

Our argument will not work for solutions to the Fisher-KPP equation on the line when $c<2$, which is not surprising, as the solutions corresponding to the heteroclinic orbit in the phase plane are known to be absolutely unstable. This is reflected in how the argument from \cref{sec:point} fails. Namely, the solution to the variational equation does indeed have zeros, and the polar angle will indeed wind around as $z \to \infty$. Loosely interpreting this in terms of Sturm-Liouville theory, this says that there are an infinite number of eigenvalues on the interval $\lambda >0$. This corresponds to the usual interpretation of the absolute spectrum as being the limit of the point spectrum in the large domain limit. 

\section{Acknowledgements} 
This work was partially supported by ARC DP200102130 and ARC DP210101102. RM would like to thank Maud El-Hachem, Scott McCue, Mat Simpson, and Martin Wechselberger for fruitful discussions during the writing of this paper.

\appendix
\section{A series expression for $\mu$ as a function of $c$, the wave speed} \label{appendix}

The goal of this appendix is to produce a series expansion of the parameter $\mu$ in terms of the (asymptotic) wave speed $c$. Given \cref{eq:asymptoticspeed} we have that for the standing wave solution $U(z)$ to \cref{eq:trav}
$$
c = -\mu U_z(0)\,,
$$
where $\lim_{z \to -\infty} U(z) = 1$. Working in the phase portrait of \cref{eq:nonlinsys}
$$
\begin{pmatrix}
u \\ v 
\end{pmatrix}_z  = F(u,v) := \begin{pmatrix} v \\ -cv -u(1-u) \end{pmatrix},
$$
we want a series expression for the unstable manifold of the saddle at $(1,0)$. We first shift the location of this saddle fixed point to the origin, and then shear the phase plane so that the stable eigenspace of the fixed point is the horizontal axis. Letting $(w,y)^{\top}$ be the new variables, we arrive at the new vector field
\begin{equation}\label{eq:transformedsys}
\begin{pmatrix}
w \\ y 
\end{pmatrix}_z  = H(w,y) := \begin{pmatrix} \nu w + y \\[2mm] w^2 - \dfrac{y}{\nu}\end{pmatrix}\,,
\end{equation}
where $\nu$ is the unstable eigenvalue of the saddle, given in terms of the wave speed by:
\begin{equation}\label{eq:unstableeig}
\nu := \dfrac{-c + \sqrt{ c^2 + 4}}{2}.
\end{equation}
We note that above we have used the fact that $\nu^2 + c \nu - 1 = 0$ to reduce the vector field to a form that is easier to work with below. 

Now we can write the unstable manifold as the graph of a series over the stable direction (now transformed to the horizontal axis). That is we have a series 
$$
J(w) = \sum_{i=2}^{\infty}  a_j w^j
$$
so that the curve $y = J(w)$ in the phase portrait represents a solution to the ODE. This curve, by construction, will pass through the fixed point at the origin and will be tangent to the unstable manifold there, and thus by Unstable Manifold Theorem (see for example \cite{mei07}) this curve will be the unstable manifold of the saddle. 
This allows us to write the unstable manifold of the origin in \cref{eq:transformedsys} as
$$
W_{\text{unst}} = \left\{(w,J(w)) \quad | \quad  y_z = J'(w)w_z\right\}.
$$
The equation 
$$
y_z = J'(w)w_z
$$
is called the {\em invariance condition} and guarantees that our curve in the phase portrait will be a solution to the ODE. Substituting in $H(w,y)$ for $w_z$ and $y_z$ and $J(w)$ for $y$, we arrive at an expression only in terms of $w$ and the known unstable eigenvalue $\nu$: 
$$
\left(\nu w + J(w) \right) J'(w) = w^2 - \dfrac{J(w)}{\nu}.
$$ 
This allows us to recursively solve for the coefficients of $J$ in terms of the unstable eigenvalue. The first few terms of the recurrence relation are below: 
\begin{align*}
a_2 & = \frac{\nu}{1+ 2 \nu^2} \\ 
a_3 & = \frac{-2 \nu^3}{(1+ 2 \nu^2)^2(1+3 \nu^2)} \\
a_4 & = \frac{10 \nu ^5}{(1+ 2 \nu^2)^3(1+3 \nu^2)(1+ 4 \nu^2)} \\
a_5 & = \frac{-12 \nu^7(6+19\nu^2)}{(1+ 2 \nu^2)^4(1+3 \nu^2)^2(1+ 4 \nu^2)(1+ 5 \nu^2)} \\ 
&\,\,\,\, \vdots 
\end{align*}
These can be put in terms of $c$ by using \cref{eq:unstableeig}.

In the transformed vector field, we are looking for the value of $J(-1)$, and our expression for $c$ has now becomes
$$
c = -\mu(J(-1) - \nu)\,, 
$$
or, rearranging, we get an expression for $\mu$
$$
\mu = \frac{c}{\nu - J(-1)}.  
$$

For the parameter values used in this manuscript, roughly 20 terms of the series was sufficient to approximate the intersection of the unstable manifold of the saddle at $(1,0)$ with the vertical axis to the point where the difference was visually indistinguishable. See \cref{fig:intersection}, as well as \cref{fig:cvsmu}, for numerical plots of the intersection of the unstable manifold with the vertical axis, as well as plots for $\mu$ vs $c$ and a formal inversion showing $c$ vs $\mu$. 

\begin{figure}
\includegraphics[scale = 0.25]{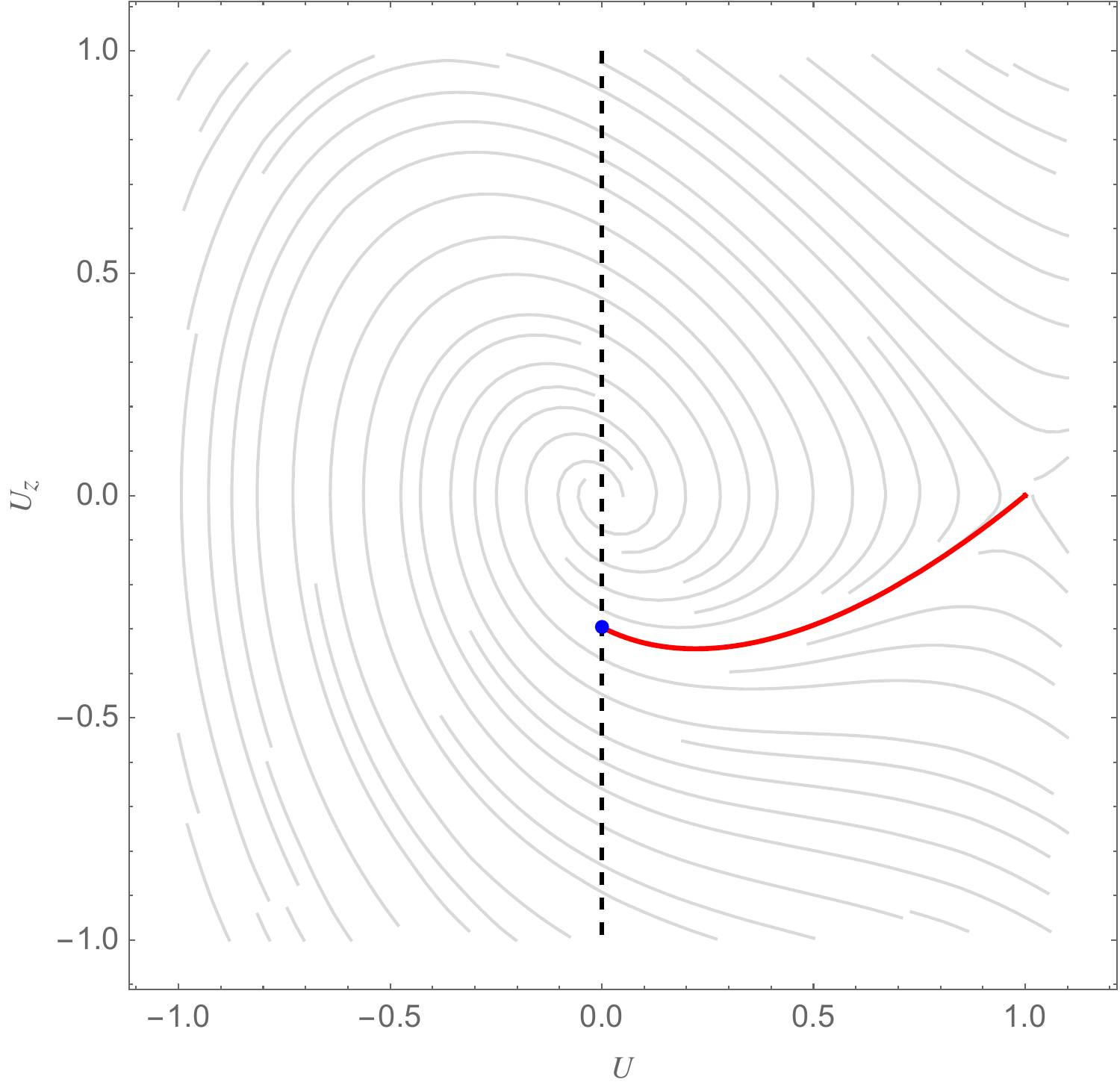} \quad
\includegraphics[scale = 0.25]{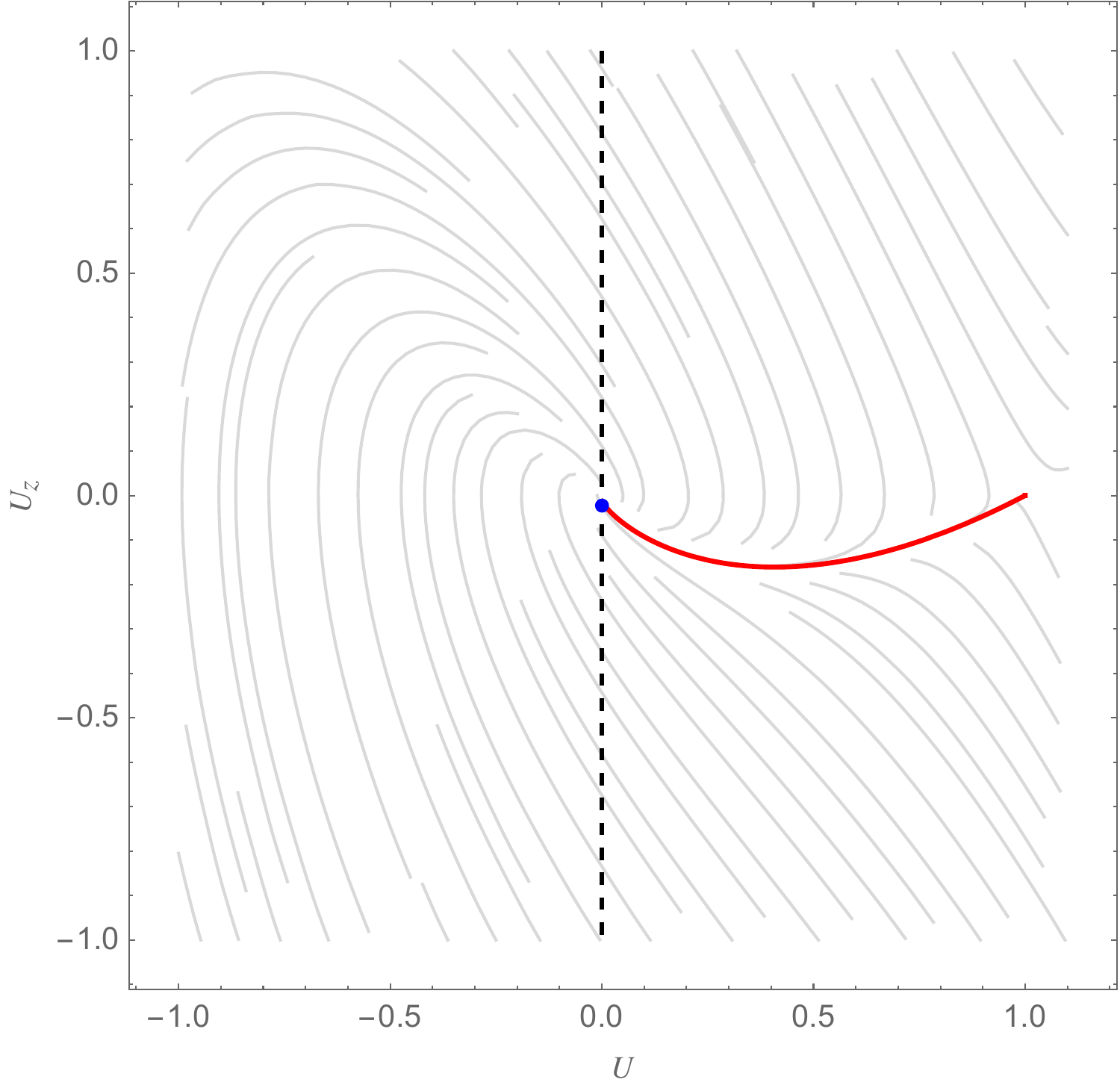}
\caption{Plots of the unstable manifold of the saddle at $(1,0)$ in the phase plane of \cref{eq:nonlinsys}. The curve (red online) intersects the vertical axis (black dashed) at the (blue online) dot. The curve is computed by numerically solving the ODE \cref{eq:nonlinsys}, while the blue dot is computed using the series expression for $J(-1)$. Left is $c=0.5$. Right is $c=1.5$ }\label{fig:intersection}
\end{figure}
\begin{figure}
\includegraphics[scale =0.5]{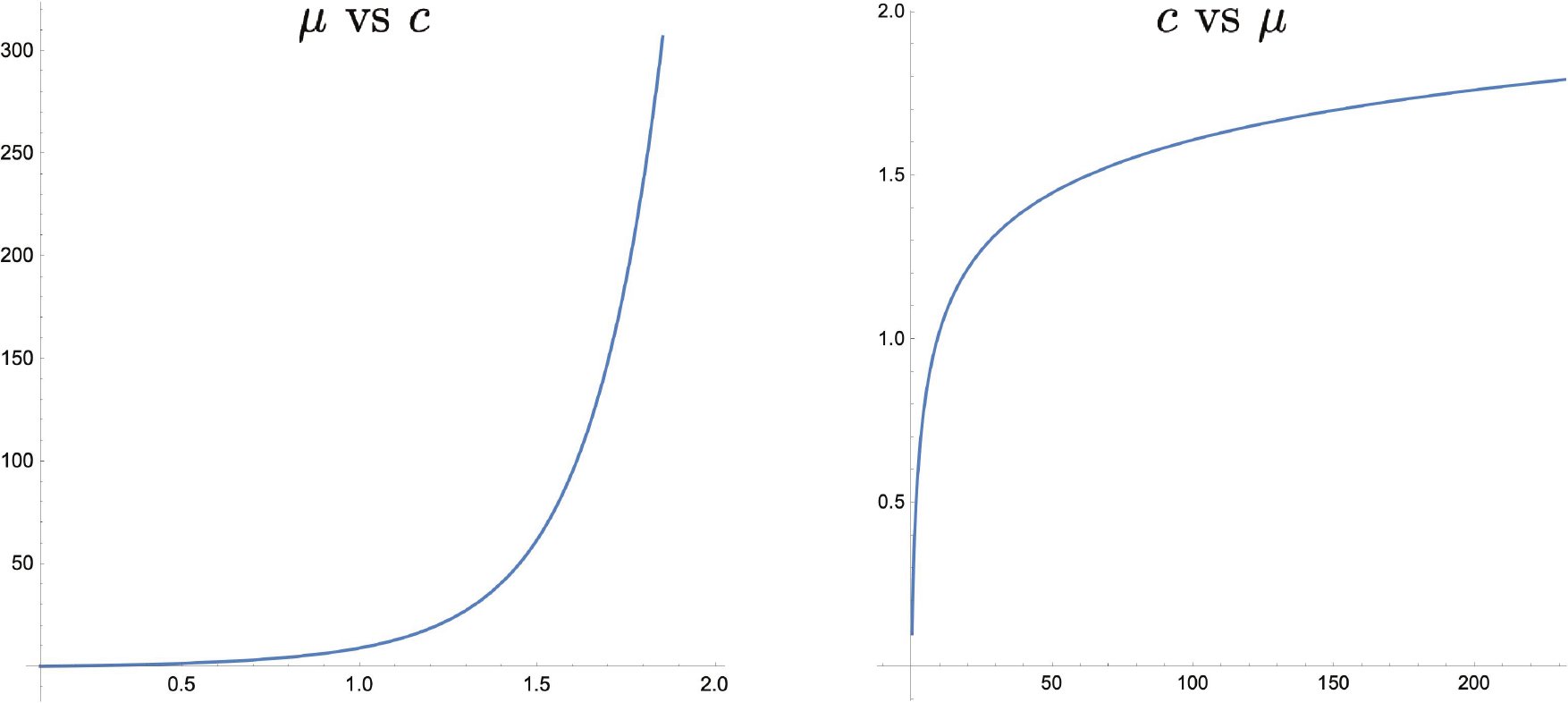}
\caption{Left: a plot of $\mu$ vs $c$ for $0<c<2$. Right: The inversion, showing $c$ vs $\mu$. }\label{fig:cvsmu}
\end{figure}

\bibliographystyle{alpha}      
\bibliography{fisherkpp}  

\end{document}